\theoremstyle{plain}
\numberwithin{equation}{section}
\newtheorem{thm}{Theorem}[section]
\newtheorem{prop}[thm]{Proposition}
\newtheorem{lem}[thm]{Lemma}
\theoremstyle{definition}
\newtheorem{dfn}[thm]{Definition}
\newtheorem{exm}[thm]{Example}
\newtheorem{rem}[thm]{Remark}
\newtheorem{Def}{Definition}[section]
\newtheorem{Prop}[Def]{Proposition}
\def\rank{\mathop{\mathrm{rank}}\nolimits}
\def\dim{\mathop{\mathrm{dim}}\nolimits}
\def\Ad{\mathop{\mathrm{Ad}}\nolimits}
\def\gsds{\Gamma(\sigma)^{\mathrm{gp}} \backslash D_{\sigma}}
\def\GsDs{\Gamma(\sigma)^{\mathrm{gp}} \backslash D_{\sigma}}
\def\Ds{D_{\sigma}}
\def\gm{\mathbb{G}_{m}}
\def\CC{\mathbb{C}}
\def\QQ{\mathbb{Q}}
\def\RR{\mathbb{R}}
\def\ZZ{\mathbb{Z}}
\def\PP{\mathbb{P}}
\def\Gr{\mathrm{Gr}}
\def\gr{\mathrm{Gr}}
\def\calH{\mathcal{H}}
\def\frakg{\mathfrak{g}}
\def\calF{\mathcal{F}}
\def\calH{\mathcal{H}}
\def\calO{\mathcal{O}}
\def\Im{\mathop{\mathrm{Im}}\nolimits}
\def\bs{\backslash}
\def\gp{\mathrm{gp}}
\def\Aut{\mathop{\mathrm{Aut}}\nolimits}
\def\Span{\mathrm{span}}
\def\l{\left}
\def\r{\right}
\def\rt-m{\sqrt{m}}
\def\00{\mathbf{0}}
\newcommand{\C}{\mathbb{C}}
\newcommand{\Z}{\mathbb{Z}}
\newcommand{\Q}{\mathbb{Q}}
\newcommand{\mf}[1]{{\mathfrak{#1}}}
\begin{document}
\title[Degenerating Hodge structure of Calabi--Yau threefolds]{Degenerating Hodge structure of one-parameter family of Calabi--Yau threefolds} 
\author{Tatsuki Hayama\ \ \ Atsushi Kanazawa}
\date{}
\subjclass[2010]{14C30, 14C34, 14J32} 
\keywords{(log) Hodge theory, Calabi--Yau, Torelli problem, mirror symmetry}

\maketitle
\begin{abstract}
To a one-parameter family of Calabi--Yau threefolds, we can associate the extended period map by the log Hodge theory of Kato and Usui. 
We study the image of a maximally unipotent monodromy point under the extended period map.   
As an application, we prove the generic Torelli theorem for a large class of one-parameter families of Calabi--Yau threefolds. 
\end{abstract}


\section{Introduction}

This short article is concerned with the limit mixed Hodge structure around a maximally unipotent monodromy (MUM) point of a one-parameter family of Calabi--Yau threefolds.  
For such a family, the period domain for the Hodge structures and the limit mixed Hodge structures (LMHSs) were previously studied in \cite{KU, GGK}. 
The starting point of the present work is the theory of normalization of the LMHS around a MUM point developed in \cite{GGK}. 
The MUM points play a central role in mirror symmetry \cite{CdOGP,CK}.  
Mirror symmetry is a duality between complex geometry and symplectic geometry among several Calabi--Yau threefolds 
and it expects that each MUM point of a family of Calabi--Yau threefolds corresponds to a {\it  mirror} Calabi--Yau threefold. 
For a large class of Calabi--Yau threefolds, we observe that the normalization of the LMHS reflects the topological invariants of mirror Calabi--Yau threefolds. 

The basic idea of this article is to investigate the degenerating Hodge structures in the framework of the log Hodge theory \cite{KU}. 
An advantage of our approach is that, by slightly extending the domain and range of the period map, we obtain a better control of the period map. 
As an application, we prove the generic Torelli theorem for a large class of one-parameter families of Calabi--Yau threefolds (Theorem \ref{generic}). 
The generic Torelli theorem was confirmed for the mirror families of Calabi--Yau hypersurfaces in weighted projective spaces by Usui \cite{U2} and Shirakawa \cite{Shi}. 
Our study is a slight refinement of their technique but can be applied to a larger class of Calabi--Yau threefolds. 
The result is particularly interesting when a family has multiple MUM points and also works for new examples beyond toric geometry 
such as the mirror family of the Pfaffian--Grassmann Calabi--Yau threefolds (Example \ref{Pfaff-Grass}). 

The layout of this article is as follows. 
Section \ref{Ch Basic Hodge Theory} covers some basics of the Hodge theory and compactifications of the period domains. 
This chapter also serves to set notations. 
Section \ref{Ch Monodromy} begins with a review of the normalization of a LMHS obtained in \cite{GGK}. 
We then study the LMHSs using the normalization. 
Section \ref{Ch Generic Torelli} is devoted to the generic Torelli for a one-parameter family of Calabi--Yau threefolds. 
Section \ref{Ch MS} briefly reviews mirror symmetry with a particular emphasis on the monodromy transformation around a MUM point. 
We also discuss some suggestive examples with two MUM points. 

\subsection*{Acknowledgement}
A.K. was supported by the Harvard CMSA during this project was done. 
This research was partially supported by the Research Fund for International Young Scientists NSFC 11350110209 (Hayama). 
We are very grateful for referee's careful reading and useful suggestions. 

\section{LMHS and partial compactification of period domain} \label{Ch Basic Hodge Theory}


\subsection{Hodge structure and period domain}\label{pd}
A Hodge structure of weight $w$ with Hodge numbers $(h^{p,q})_{p,q}$ is a pair $(H,F)$ consisting of a free $\ZZ$-module $H$ of rank $\sum_{p,q}h^{p,q}$ 
and a decreasing filtration $F$ on $H_{\CC}:=H\otimes \CC$ satisfying the following conditions:
\begin{enumerate}
\item $\dim_{\CC} F^p=\sum_{r\geq p}h^{r,w-r}\quad \text{for all $p$;}$\label{check1}
\item $H_{\CC}=\bigoplus _{p+q=w} H^{p,q}\quad(H^{p,q}:=F^p\cap \overline{F^{w-p}}, \ \dim_\CC H^{p,q}=h^{p,q}).$
\end{enumerate}
For Hodge structures $(H,F)$ and $(H',F')$, a homomorphism $f:H\to H'$ is a $(r,r)$-morphism of the Hodge structures if $f(F^p)\subset F'^{p+r}$ and $f(\bar{F}^p)\subset \bar{F'}^{p+r}$.

A polarization $\langle* ,**\rangle$ for a Hodge structure $(H,F)$ of weight $w$ is a non-degenerate bilinear form on $H$, 
symmetric if $w$ is even and skew-symmetric if $w$ is odd, satisfying the following:
\begin{enumerate}
\item[(3)] $\langle F^p , F^q \rangle=0\quad \text{for $p+q>w$;}$\label{check2}
\item[(4)] $\sqrt{-1}^{p-q}\langle v,\bar{v}\rangle >0$ for $0\neq v\in H^{p,q}$.
\end{enumerate}
 
We fix a polarized Hodge structure $(H_{0},F_0,\langle* ,**\rangle_0)$ of weight $w$ with Hodge numbers $(h^{p,q})_{p,q}$.
We define the period domain $D$ which parametrizes all the Hodge structures of this type by 
$$
D:=\left\{\begin{array}{l|l}F&\begin{array}{r}(H_{0} , F ,\langle* ,**\rangle_0)\text{ is a polarized Hodge structure}\\ \text{ of weight $w$ with Hodge numbers $(h^{p,q})_{p,q}$}\end{array}\end{array}\right\}.
$$
The compact dual $\check D$ of $D$ is 
$$
\check{D}:=\left\{\begin{array}{l|l}F&\begin{array}{r}(H_{0} , F ,\langle* ,**\rangle_0)\text{ satisfies the above (1)--(3)}\end{array}\end{array}\right\}.
$$
Let $G_A:=\Aut{(H_{0}\otimes A,\langle* ,**\rangle_0)}$ for a $\ZZ$-module $A$.
Then $G_{\RR}$ acts transitively on $D$ and $G_{\CC}$ acts transitively on $\check D$. 
A variation of Hodge structure (VHS) over a complex manifold $S$ is a pair $(\calH,\calF)$ consisting of a $\ZZ$-local system and a filtration of $\calH\otimes \calO_S$ over $S$ satisfying the following: 
\begin{enumerate}
\item The fiber $(H_{s},F_{s})$ at $s\in S$ is a Hodge structure;
\item $\nabla\calF^p\subset \calF^{p-1}\otimes \Omega ^1_S$ for the connection 
$\nabla:=id\otimes d: \calH\otimes \calO_S \to \calH\otimes \Omega^1_S$.  
\end{enumerate}
A polarization for a VHS is a bilinear form on the local system which defines a polarization on each fiber.  
In this article, a VHS is always assumed to be polarized. 

For a VHS over $S$, we fix a base point $s_0\in S$.
Let $D$ be the period domain for the Hodge structure at $s_0$.
We then have the period map $\phi:S\to \Gamma\bs D$ via $s\mapsto F_s$, where $\Gamma$ is the monodromy group.


\subsection{Limit mixed Hodge structure}\label{LMHSsec}
Let $\bar{S}$ be a smooth compactification of $S$ such that $\bar{S}- S$ is a normal crossing divisor.
For each $p\in \bar{S} -S$, there exists a neighbourhood $V$ of around $p$ in $\bar{S}$ 
such that $U:=V\cap S\cong (\Delta^*)^m\times\Delta^{n-m}$ where $\Delta$ is the unit disk. 
We can lift the period map to $\tilde{\phi}:\tilde{U}\to D$, where $\tilde{U}\to U$ is the universal covering map. 
Under the identification $\tilde{U}\cong \mathbb{H}^m\times \Delta^{n-m}$, the covering map $\tilde{U}\to U$ is given by 
$$
(z_1,\ldots ,z_{n})\mapsto (\exp{(2\pi \sqrt{-1}z_{1})},\ldots ,\exp{(2\pi \sqrt{-1}z_{m})},z_{m+1},\ldots ,z_{n}). 
$$   
Let $T_1,\ldots ,T_m$ be a generator of the monodromy around $p$ such that
$$
\tilde{\phi}(\cdots,z_j+1,\cdots)=T_j\tilde{\phi}(\cdots,z_j,\cdots).
$$  
Let us assume $T_j$ is unipotent.
Then $N_j=\log{T_j}$ is nilpotent in the Lie algebra $\frakg_{\QQ}$, and $N_1,\ldots ,N_m$ are commutating with each other. 
We define a map $\tilde{\psi}:\tilde{U}\to \check{D}$ by 
$$
z\mapsto\exp{(-\sum_j z_jN_j)}\phi(z).
$$
Since $\tilde{\psi}(\cdots,z_j+1,\cdots)=\psi(\cdots,z_j,\cdots)$, the map $\tilde{\psi}$ descends to a map $\psi:U\to \check{D}$, 
which admits a unique extension to $\psi:\Delta^n\to \check{D}$ by \cite{Sch}. 
We call $F_{\infty}:=\psi(\mathbf{0})\in\check{D}$ the limit Hodge filtration (LHF).

\begin{rem}\label{LMHF}
The LHF is not uniquely determined by a VHS.  
In fact, for $f_j\in \calO_{\Delta}$, we obtain a new coordinate 
$$
(\exp{(2\pi \sqrt{-1}f_1(z_1))}z_1,\ldots ,\exp{(2\pi \sqrt{-1}f_n(z_n))}z_n), 
$$
with respect to which, the LHF is given by $\exp{(-\sum f_j(0)N_j)}F_{\infty}$. 
Moreover, $N_1,\ldots ,N_m$ depend also on the choice of coordinates.
However the nilpotent orbit (to be discussed in the next subsection) is uniquely determined by the VHS.  
\end{rem}

Let $N:=N_1+\cdots +N_m$.  
By \cite{Sch},  we have an increasing filtration $W(N)$ of $H_{\RR,0}:=H_{0}\otimes \RR$. 
Denoting by $W$ the shifted filtration of $W(N)$ by the weight $w$, we observe the pair $(W,F_{\infty})$ has the following properties:
\begin{enumerate}
\item the graded quotient $(\gr^{W}_k,F_{\infty}\gr^{W}_{k,\CC})$ is a Hodge structure of weight $k$;
\item $N$ defines a $(-1,-1)$-morphism $(\gr^{W}_{k},F_{\infty}\gr^{W}_{k,\CC})\to (\gr^{W}_{k-2},F_{\infty}\gr^{W}_{k-2,\CC})$ of Hodge structures;
\item $N^k:(\gr^{W}_{w+k},F_{\infty}\gr^{W}_{w+k,\CC})\to (\gr^{W}_{w-k},F_{\infty}\gr^{W}_{w-k,\CC})$ is isomorphism;
\item the paring $\langle * ,N^k(**)\rangle$ gives a polarization on $(\gr^{W}_{w+k},F_{\infty}\gr^{W}_{w+k,\CC})$.
\end{enumerate}
The pair $(W,F_{\infty})$ is called the limit mixed Hodge structure (LMHS).


\subsection{Partial compactification of period domain}\label{cpt_pd}
We call $\sigma\subset \frakg_{\RR}$ a nilpotent cone if it satisfies the following:
\begin{enumerate}
\item $\sigma$ is a closed cone generated by finitely many elements of $\frakg_{\QQ}$;
\item $N\in \sigma$ is a nilpotent as an endmorphism of $H_{\RR}$;
\item $NN'=N'N$ for any $N,N'\in \sigma$.
\end{enumerate} 
A nonempty set $\Sigma$ of (finitely or infinitely many) cones is called a fan if it satisfies the following: 
\begin{enumerate}
\item If $\sigma\in\Sigma$, any face of $\sigma$ belongs to $\Sigma$;
\item If $\sigma,\tau\in \Sigma$, then $\sigma\cap\tau$ is a face of $\sigma$ and of $\tau$.
\end{enumerate} 
For $A=\RR,\CC$, we denote by $\sigma_{A}$ the $A$-linear span of $\sigma$ in $\mf{g}_A$.
\begin{dfn}\label{nilp}
Let $\sigma=\sum_{j=1}^n\RR_{\geq 0}N_j$ be a nilpotent cone and $F\in\check{D}$. 
Then the pair consisting of $\sigma $ and $\exp{(\sigma_{\CC})}F\subset\check{D}$ is called a nilpotent orbit if it satisfies the following: 
\begin{enumerate}
\item $\exp{(\sum_j \sqrt{-1} y_jN_j)}F\in D$ for all $y_j\gg 0$.
\item $NF^p\subset F^{p-1}$ for all $p\in\ZZ$ and for all $N\in \sigma$.
\end{enumerate}
\end{dfn}
The data $(N_1,\ldots ,N_m,F_{\infty})$ given in the previous section generates a nilpotent orbit.
Moreover, any nilpotent orbit generates a LMHS.
In fact, $W(N)=W(N')$ for any $N$ and $N'$ in the relative interior of $\sigma$ (see \cite{CK} for example),  
and the pair $(W(N)[w],F')$ is a LMHS for any $F'\in \exp{(\sigma_{\CC})}F$.

Let $\Sigma$ be a fan consisting of nilpotent cones. 
We define the set of nilpotent orbits
$$
D_{\Sigma}:=\{(\sigma,Z)|\; \sigma\in\Sigma,\; (\sigma, Z)\;\text{is a nilpotent orbit}\}.
$$
For a nilpotent cone $\sigma$, the set of faces of $\sigma$ is a fan, and we abbreviate $D_{\{\text{faces of }\sigma\}}$ as $D_{\sigma}$.
Let $\Gamma$ be a subgroup of $G_{\ZZ}$ and $\Sigma$ a fan of nilpotent cones.
We say $\Gamma$ is compatible with $\Sigma$ if $\mathrm{Ad}(\gamma)(\sigma)\in\Sigma$ for all $\gamma\in\Gamma$ and $\sigma\in\Sigma$.
Then $\Gamma$ acts on $D_{\Sigma}$ if $\Gamma$ is compatible with $\Sigma$.
Moreover we say $\Gamma$ is strongly compatible with $\Sigma$ if it is compatible with $\Sigma$ 
and for all $\sigma\in \Sigma$ there exist $\gamma_1,\ldots,\gamma_n\in \Gamma(\sigma):=\Gamma\cap\exp{(\sigma)}$ such that $\sigma=\sum_{j}\RR_{\geq 0}\log{(\gamma_j)}$. 

We consider the geometric structure of $\GsDs$ in the case where $\sigma$ has rank $1$ (we will discuss this case in the next section). 
For a nilpotent cone $\sigma=\RR_{\geq 0}N$ and the $\ZZ$-subgroup $\Gamma(\sigma)^{\gp}=e^{\ZZ N}$, we have the partial compactification $\Gamma(\sigma)^{\gp}\bs D_{\sigma}$.
We now show its geometric structure following the exposition of \cite{KU}. 
Let us define
\begin{equation*}
\CC\times \check{D}\supset E_{\sigma}:=\left\{\begin{array}{l|l}(s,F) &\begin{array}{l}\exp{\l(\ell(s) N\r)}F\in D\text{ if } s\neq 0,\\ (\sigma, \exp{(\CC N)}F) \text{ is a nilpotent orbit if }s=0\end{array}\end{array}\right\}, 
\end{equation*}
where $\ell(s)$ is a branch of $\frac{\log(s)}{2\pi \sqrt{-1}}$.
Here $\CC$ is endowed with a log structure as a toric variety and $\CC\times \check{D}$ is a logarithmic analytic space.
By \cite[Theorem A]{KU}, the subspace $E_{\sigma}$ is a log manifold with the map 
$$
E_{\sigma}\to \Gamma(\sigma)^{\mathrm{gp}}\backslash D_{\sigma}, \ \ \
(s,F)\mapsto \begin{cases}\exp{(\ell(s)N)}F & \text{if }s\neq 0,\\(\sigma,\exp{(\sigma_{\CC})}F) & \text{if }s=0.\end{cases}
$$
The geometric structure of $\GsDs$ is induced by the above map, which is a $\C$-torsor, i.e. $\GsDs \cong E_{\sigma}/\C$. 

\begin{thm}[{\cite[Theorem A]{KU}}]
Let $\Sigma$ be a fan of nilpotent cones and let $\Gamma$ be a subgroup of $G_{\ZZ}$ which is strongly compatible with $\Sigma$. 
Then the following hold:
\begin{enumerate}
\item If $\Gamma$ is neat (i.e., the subgroup of $\gm (\CC)$ generated by all the eigenvalues of all $\gamma\in\Gamma$ is torsion free), 
then $\Gamma\backslash D_{\Sigma}$ is a logarithmic manifold.
\item The map $\gsds\to\Gamma\bs D_{\Sigma}$ is open and locally an isomorphism of logarithmic manifolds.
\end{enumerate}
\end{thm}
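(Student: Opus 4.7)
The approach is to reduce the global statement to a local analysis via the torsor construction $\kutorsor$ already introduced for rank-one cones, and its natural extension to arbitrary rank. For a general nilpotent cone $\sigma=\sum_j\RR_{\geq 0}N_j$, replace $\CC$ by the affine toric variety $\toric_\sigma$ attached to $\sigma$ and the lattice $\log\Gs$, and define
$$
E_\sigma\subset \toric_\sigma\times\check{D}
$$
by the evident stratum-wise analogue of the nilpotent-orbit condition. Schmid's nilpotent orbit theorem supplies a holomorphic extension of the lifted period map to the toric boundary; this is the analytic heart of the argument. One then verifies that $E_\sigma$ is a log manifold, with log structure pulled back from $\toric_\sigma\times\check{D}$, and that the projection $E_\sigma\to \gsds$ is a torsor under $\sigma_\CC/\log\Gs$, thereby endowing $\gsds$ with its log manifold structure.

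Next I would patch the charts along the fan and pass to the $\Gamma$-quotient. For $\tau$ a face of $\sigma$, the toric open immersion $\toric_\tau\hookrightarrow\toric_\sigma$ induces a canonical open immersion $E_\tau\hookrightarrow E_\sigma$, which glues the spaces $\gsds$ together into $D_\Sigma$. Strong compatibility of $\Gamma$ with $\Sigma$ guarantees that $\Gamma$ permutes the cones of $\Sigma$ and that each $\Gs$ generates $\sigma$, so the local log structures are $\Gamma$-equivariantly compatible. Neatness then ensures that on each boundary stratum the residual action of $\Gamma/\Gs$ is free and the quotient is locally a log manifold; assembling these pieces yields (1). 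For (2), openness is a local property which follows from the torsor description; that the map is locally an isomorphism amounts to the assertion that the $\Gamma$-stabilizer of any boundary point corresponding to $\sigma$ is precisely $\Gs$, which is guaranteed by neatness together with strong compatibility.

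The main obstacle is the construction in the first step: proving that $E_\sigma$ is a log manifold. Both the existence of the pulled-back fs-log structure in the required sense and the fact that the nilpotent-orbit condition cuts out a suitable subspace rely on nontrivial analytic input — Schmid's nilpotent orbit theorem, together with the $\SL_2$-orbit theorem, to control the extension of the period map across the boundary — combined with the log-geometric framework of \cite{KU} needed to make sense of a \emph{log manifold} at points where the infinitesimal structure is not of finite type. Once this local picture is secured, the patching and quotienting steps are largely formal.
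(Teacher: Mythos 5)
This statement is quoted verbatim from Kato--Usui \cite[Theorem A]{KU}; the paper offers no proof of it, so there is nothing internal to compare your attempt against. Judged on its own terms, your sketch does reproduce the correct overall architecture of the Kato--Usui construction (local charts $E_\sigma\subset\toric_\sigma\times\check D$ cut out by the nilpotent-orbit condition, the torsor $E_\sigma\to\Gs\bs D_\sigma$ under $\sigma_{\CC}/\log\Gs$, gluing along faces, then dividing by $\Gamma$), but as a proof it has genuine gaps, and one of its key analytic inputs is misattributed. Theorem A is a statement about the classifying space $D_\Sigma$ alone; no variation of Hodge structure and no period map appear in it, so Schmid's nilpotent orbit theorem cannot be ``the analytic heart'' of the argument. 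The actual analytic engine in \cite{KU} is the several-variable $\SL(2)$-orbit theorem of Cattani--Kaplan--Schmid and its refinements, which are used to build the auxiliary spaces $D_{\mathrm{SL}(2)}$ and $D_{\mathrm{BS}}$ and to prove the crucial properness and openness statements: that $E_\sigma$ is open in its closure $\check E_\sigma$ for the strong topology (so that it is a log manifold at all), and that the actions of $\sigma_{\CC}$ and of $\Gamma$ are proper in the appropriate sense, so that the quotients are Hausdorff. Your sketch treats these as routine verifications, whereas they occupy the bulk of the Kato--Usui book.

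A second concrete problem is your treatment of the $\Gamma$-quotient. You assert that neatness makes ``the residual action of $\Gamma/\Gs$ free'' on each boundary stratum; but $\Gs$ is not normal in $\Gamma$, so this quotient is not a group, and the statement you actually need is that the isotropy subgroup in $\Gamma$ of any $\sigma$-nilpotent orbit is exactly $\Gs$ (equivalently, that $\gsds\to\Gamma\bs D_\Sigma$ is injective near the boundary point in question). This is precisely part (2) of the theorem, and in \cite{KU} it is \emph{not} a formal consequence of neatness plus strong compatibility: it is derived from the properness of the $\Gamma$-action established via $D_{\mathrm{SL}(2)}$ and the valuative spaces. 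As written, your argument for (2) is circular --- it assumes the local injectivity it is supposed to prove. To repair the sketch you would need to import (or reprove) the $\SL(2)$-orbit machinery; since the paper under review simply cites \cite{KU} for this result, the appropriate course here is to do the same.
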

Logarithmic manifolds are a generalization of analytic spaces introduced in \cite{KU}.
A logarithmic manifold is a subspace of a logarithmic analytic space, whose topology is induced by the strong topology. 

For a VHS, locally the period map $U\to\Gamma\bs D$ can be extended to the map $V\to\Gamma\bs D_{\sigma}$.
We assume that there exists a fan $\Sigma$ which includes all the nilpotent cones coming from the local monodromies around the boundary components $\bar{S} - S$. 
It is worth noting that a construction of such a fan is still an open problem in general, but later we will deal with the one-dimensional case where the construction is straightforward (cf. \cite[\S 4]{U1}). 
Then we have an extended period map $\bar{S}\to \Gamma\bs D_{\Sigma}$ and, although the target space is not an analytic space, we have the following:  
\begin{thm}[{\cite[\S 5]{U1}}]\label{image}
The image of $\bar{S}$ is a compact analytic space if $\bar{S}$ is compact. 
\end{thm}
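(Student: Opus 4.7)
The plan is to reduce, via the $\CC$-torsor $\kutorsor$ from Theorem A of \cite{KU}, to a local analytic question on the ambient analytic space $\CC^m\times\check{D}$, apply Remmert's proper mapping theorem, and glue using compactness of $\bar S$.

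First, I would work locally around each boundary point. For $p\in\bar{S}$, choose a polydisk neighborhood $V$ with $V\cap S\cong(\Delta^*)^m\times\Delta^{n-m}$, so that the local monodromies $N_1,\ldots,N_m$ generate a nilpotent cone $\sigma\in\Sigma$. By Section \ref{LMHSsec}, the map $\psi:\Delta^n\to\check D$ is analytic, and combined with the inclusion $\Delta^m\hookrightarrow\CC^m$ it produces an analytic lift
$$
\tilde\phi_V:V\longrightarrow E_\sigma\hookrightarrow \CC^m\times\check D,\qquad (z_1,\ldots,z_n)\longmapsto\bigl(z_1,\ldots,z_m,\psi(z)\bigr),
$$
whose composition with the torsor projection $E_\sigma\to\Gs\backslash D_\sigma$ recovers the extended period map on $V$. (For $p\in S$, take $m=0$ and the lift is just the usual analytic period map into $\check D$.)

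Second, shrink $V$ to a relatively compact $V'\Subset V$. Since $\tilde\phi_V$ is an analytic map of complex spaces and $\overline{V'}$ is compact, Remmert's proper mapping theorem implies that $\tilde\phi_V(\overline{V'})$ is an analytic subset of some open subset of $\CC^m\times\check D$. I would then descend through the torsor: by Theorem A of \cite{KU}, $E_\sigma\to\Gs\backslash D_\sigma$ is a principal $\CC^m$-bundle for an analytic $\CC^m$-action on $\CC^m\times\check D$ which preserves $E_\sigma$, and by Theorem A(2) the further map $\Gs\backslash D_\sigma\to\Gamma\backslash D_\Sigma$ is locally an isomorphism of log manifolds. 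Consequently, the set-theoretic image $\phi(\overline{V'})\subset\Gamma\backslash D_\Sigma$ inherits a natural structure of an analytic subspace.

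Finally, by compactness of $\bar S$, finitely many $V_i'$ cover $\bar S$. On overlaps the local analytic structures on $\phi(\overline{V_i'})$ agree, because they all arise from the single global extended period map and because the ambiguity in the choice of local coordinates (see Remark \ref{LMHF}) changes only the lift to $E_\sigma$ by an element of the $\CC^m$-action, hence does not affect the descended image. Gluing the finitely many local pieces yields the global compact analytic space $\phi(\bar S)$.

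The main obstacle is the descent step: $\Gamma\backslash D_\Sigma$ is only a log manifold, not an analytic space, so I cannot directly quote Remmert on it. One must check carefully that the $\CC^m$-saturation of the analytic set $\tilde\phi_V(\overline{V'})\subset E_\sigma$ really descends to an analytic subspace (in the appropriate log-manifold sense) of $\Gs\backslash D_\sigma$ and that these local analytic structures are compatible on overlaps — this is precisely where the torsor description of \cite{KU} is essential.
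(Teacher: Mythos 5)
First, a point of comparison: the paper does not prove this statement at all --- it is imported verbatim from \cite[\S 5]{U1}, so the only ``proof'' in the paper is the citation. Your proposal therefore has to be judged against Usui's argument, and against the mechanism the paper itself uses one section later (Lemma \ref{surj}). Measured that way, your skeleton (local lift, Remmert, glue by compactness) is the right one, but the step you yourself flag as ``the main obstacle'' is not a technicality to be checked --- it is the entire content of the theorem, and as written it does not go through. Remmert's theorem hands you an analytic subset of $\CC^m\times\check D$ upstairs, but the torsor projection $\kutorsor$ lands in a log manifold that is \emph{not} an analytic space, and the image of an analytic set under this projection has no a priori analytic structure: the $\sigma_{\CC}$-saturation of an analytic subset need not be analytic, and a set-theoretic quotient of an analytic set by a non-proper $\CC^m$-action carries no canonical structure sheaf. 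The way out, which is both Usui's route and the one realized in Lemma \ref{surj} via the mirror-map coordinate $q$, is to choose the lift to be a \emph{section of the torsor over the image}: one replaces your lift $(z_1,\dots,z_m,\psi(z))$ by the normalized lift $\rho\circ\phi$ built from the canonical coordinates, so that the projection restricted to $\rho(\phi(V))$ is injective onto $\phi(V)$. Then $\phi(V)$ is literally homeomorphic to the image of a proper analytic map into the honest analytic space $\CC^m\times\check D$, and Remmert applies there; no descent of analytic sets through the quotient is ever needed. Your lift $(\mathrm{id},\psi)$ cannot serve this purpose because the projection restricted to its image is the period map itself, which may identify distinct points.

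Two smaller problems. First, applying Remmert to $\tilde\phi_V|_{\overline{V'}}$ for $V'\Subset V$ does not yield an analytic set: the image of a compact subset under a holomorphic map is in general not analytic (the image of a closed subdisk of $\Delta$ under the identity is a closed disk). Analyticity of the image requires properness of the map onto an \emph{open} target, which here must be extracted from the global properness of $\phi:\bar S\to\Gamma\backslash D_\Sigma$ ($\bar S$ compact, target Hausdorff by \cite{KU}), so that near a point of the image one writes $\phi(\bar S)$ as a finite union of images of maps that are proper over a fixed open set. Second, in the gluing step the compatibility of the local structure sheaves on overlaps is asserted but not argued; once the images are realized as analytic sets via local sections as above, the ambiguity of Remark \ref{LMHF} indeed only moves the section by the $\CC^m$-action, and the induced reduced structures agree --- but that identification is exactly what has to be checked, and it is where the log-manifold formalism of \cite{KU} and the argument of \cite[\S 4--5]{U2} are actually used.
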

Moreover, the map is also analytic since the category of logarithmic analytic spaces is a full subcategory of $\mathcal{B}(\mathrm{log})$ 
whose objects are logarithmic manifolds (\cite[\S 3]{KU}).


\section{The case where $\rank{H}=4$ with $h^{3,0}=h^{2,1}=1$} \label{Ch Monodromy}
In this section, we consider the Hodge structures with Hodge numbers 
$$
h^{p,q}=1\ \text{ if } p+q=3, \  p,q\geq 0,\ \ \ \text{and} \ \ \ h^{p,q}=0 \ \text{ otherwise}.
$$
In this case, the partial compactifications of the period domain $D$ are well-studied in \cite[\S 12.3]{KU} and \cite{GGK}. 
Note that $\rank{H}=4$ and $G_{\ZZ}=Sp(2,\ZZ)$. 
The period domain $D$ is the flag domain $Sp(2,\RR)/(U(1)\times U(1))$ of dimension $4$. 
If $\sigma$ generates a nilpotent orbit, then $\sigma=\RR_{\geq 0} N$ and $N$ is one of the following types:
\begin{enumerate}
\item $N^2=0$ and $\dim\Im{N}=1$;
\item $N^2=0$ and $\dim\Im{N}=2$;
\item $N^3\neq 0$ and $N^4=0$.
\end{enumerate}
The case (3) is called maximally unipotent monodromy (MUM).
The goal of this section is to analyze MUM and their LMHS in detail.


\subsection{Normalization of monodromy matrix}
Let $T\in G_{\ZZ}$ be a unipotent element such that $\log{T}=N$ is a MUM element. 
The monodromy weight filtration $W=W(N)[3]$ is
$$
\{0\}=W_{-1}\subset W_0=W_1\subset W_2=W_3\subset W_4=W_5\subset W_6=H_{\QQ} 
$$
with the graded quotient $\Gr^{W}_{2p}\cong \QQ$ for $0 \le p \le 3$. 
The pair $(\Gr_{2p}^W,F\Gr_{2p,\CC}^W)$ is the Tate Hodge structure of weight $2p$ if $(N,F)$ generates a nilpotent orbit.
The LMHS condition induces
$$
\Gr^W_{6}\stackrel{N}{\longrightarrow} \Gr^W_{4}\stackrel{N}{\longrightarrow}
\Gr^{W}_2\stackrel{N}{\longrightarrow}\Gr^W_{0},
$$
where each $N:\Gr^{W}_{2p}\to\Gr^{W}_{2p-2}$ is an isomorphism of Hodge structures.

By \cite[Lemma (I.B.1) \& (I.B.3)]{GGK}, we may choose a symplectic basis $e_0,\ldots ,e_3$ of $H_{\ZZ}$ which satisfies
\begin{align}\label{pol} 
W_{2p}=\Span_{\RR}\{e_{j}\;|\; 0\leq j\leq p \} \ \ (0 \le p \le 3), \ \ \ 
(\langle e_i,e_j\rangle )_{i,j}
=\begin{bmatrix}
0&0&0&1\cr 
0&0&1&0\cr 
0&-1&0&0\cr -1
&0&0&0
\end{bmatrix}.
\end{align}
By \cite[(I.B.7)]{GGK}, with respect to this basis, $N$ is of the form 
\begin{align}\label{norm_N}
N=\begin{bmatrix}0&0&0&0\cr a&0&0&0\cr e&b&0&0\cr f&e&-a&0\end{bmatrix}.
\end{align}
for some $a,b,e,f \in \Q$. 
The polarization condition of a LMHS yields the following inequalities: 
$$
\sqrt{-1}^{6}\langle e_3, N^3 e_3\rangle =a^2b>0,\quad \sqrt{-1}^4\langle e_2, N e_2\rangle = b>0. 
$$
Moreover, we have 
$$
T=e^N=\begin{bmatrix}1&0&0&0\cr a&1&0&0\cr e+\frac{ab}{2}&b&1&0\cr f-\frac{a^2b}{6}&e-\frac{ab}{2}&-a&1\end{bmatrix}\in G_{\ZZ},
$$
which shows that 
\begin{align}\label{abef}
a,\ b,\ e \pm \frac{ab}{2}, \ f-\frac{a^2b}{6}\in \ZZ.
\end{align}
The symplectic basis $e_3,\ldots ,e_0$ with the property (\ref{pol}) is not unique; 
for any $A\in G_{\ZZ}(W):=\Aut(H,\langle *,**\rangle,W)$, the new basis $Ae_3,\ldots ,Ae_0$ will do. 
Any $A\in G_{\ZZ}(W)$ is represented by a lower triangular matrix with $1$'s on the diagonal, and thus written as $A=e^M$ for
$$
M=\begin{bmatrix}0&0&0&0\cr p&0&0&0\cr r&q&0&0\cr s&r&-p&0\end{bmatrix}
$$
where $p,q,r,s$ satisfy the same condition as $a,b,e,f$ in (\ref{abef}). 
Under the transformation $N\to \Ad{(A)}N$, the entries $a,b,e,f$ change as follows: 
\begin{align}\label{trans}
&a\mapsto a,\quad b\mapsto b,\quad e\mapsto e-bp+aq,\\
&f\mapsto f-2ep+bp^2-apq+2ar.\nonumber
\end{align}

\begin{prop}\label{GGK_IB10}
Under the action of $G_{\ZZ}(W)$, $b$ is invariant, and $a$ is invariant up to $\pm 1$.
Moreover, for $m=\mathrm{gcd}(a,b)$, $[e]\in \ZZ / m\ZZ$ is invariant if $ab$ is even, and $[2e]\in \ZZ / 2m\ZZ$ is invariant if $ab$ is odd.
\end{prop}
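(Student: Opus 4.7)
The plan is to deduce everything from the two explicit ingredients already assembled in the text: the conjugation formula \eqref{trans} for the action of the unipotent radical of $G_{\ZZ}(W)$, and the integrality constraints \eqref{abef}.

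From the first two lines of \eqref{trans} one reads off $a\mapsto a$ and $b\mapsto b$, so both are strict invariants of the unipotent radical. To obtain the ``up to $\pm 1$'' qualifier for $a$, I would observe that $G_{\ZZ}(W)$ strictly contains its unipotent radical: a lower-triangular symplectic $\ZZ$-matrix preserving $W$ may have diagonal of the form $\mathrm{diag}(\alpha,\beta,\beta,\alpha)$ with $\alpha,\beta\in\{\pm 1\}$, forced by the symplectic constraints $g_{11}g_{44}=g_{22}g_{33}=1$ together with integrality. A direct computation conjugating $N$ by such a diagonal matrix leaves $b$ and $f$ unchanged and sends $(a,e)\mapsto(\alpha\beta\,a,\ \alpha\beta\,e)$. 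Combined with the invariance under the unipotent radical, this gives the stated invariance of $b$ and the invariance of $a$ up to the sign $\alpha\beta$.

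For the statement on $[e]$, I would use the third component of \eqref{trans}, namely $e\mapsto e-bp+aq$. The parametrization exhibited just before \eqref{trans} of the unipotent radical is by $(p,q,r,s)$ satisfying the exact analogue of \eqref{abef}, so in particular $p,q\in\ZZ$; hence $-bp+aq\in a\ZZ+b\ZZ=m\ZZ$. It remains to locate $e$ itself on the real line: from \eqref{abef}, the two conditions $e\pm\tfrac{ab}{2}\in\ZZ$ together give $e\equiv\tfrac{ab}{2}\pmod{\ZZ}$, so $e\in\ZZ$ exactly when $ab$ is even and $e\in\tfrac{1}{2}+\ZZ$ when $ab$ is odd. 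In the even case $[e]\in\ZZ/m\ZZ$ is well-defined and preserved by any shift in $m\ZZ$. In the odd case, passing to $2e\in\ZZ$, the shift doubles to $2e\mapsto 2e-2bp+2aq\in 2m\ZZ$, giving invariance of $[2e]\in\ZZ/2m\ZZ$.

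The main nontrivial step is to justify that the unipotent radical of $G_{\ZZ}(W)$ is really parametrized as claimed by quadruples $(p,q,r,s)$ satisfying the analogue of \eqref{abef} — in particular that $p,q$ are forced to be ordinary integers, not half-integers — since this is what secures the divisibility $-bp+aq\in m\ZZ$. The argument runs parallel to the derivation of \eqref{abef} from $T=e^N\in G_{\ZZ}$, but must be carried out carefully. A minor caveat is that the sign-change Levi element sends $[e]\mapsto[-e]$, so strict invariance of $[e]$ under all of $G_{\ZZ}(W)$ (as opposed to merely $\pm[e]$) should be understood after using the $\pm 1$ ambiguity of $a$ to fix a normalization such as $a>0$.
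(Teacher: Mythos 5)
Your argument is correct and is essentially the computation the paper intends: the paper itself defers the proof to \cite[Proposition I.B.10]{GGK}, but it assembles exactly the ingredients you use, namely the transformation law (\ref{trans}) for the unipotent part of $G_{\ZZ}(W)$ and the integrality constraints (\ref{abef}), which force $p,q\in\ZZ$ (hence $-bp+aq\in m\ZZ$) and locate $e$ in $\ZZ$ or $\tfrac{1}{2}+\ZZ$ according to the parity of $ab$. Your explicit treatment of the diagonal Levi elements $\mathrm{diag}(\alpha,\beta,\beta,\alpha)$ is a worthwhile supplement, since the paper's assertion that every element of $G_{\ZZ}(W)$ has $1$'s on the diagonal would, taken literally, make $a$ strictly invariant rather than invariant only up to $\pm 1$; your closing caveat that the same sign ambiguity affects $[e]$ is likewise the correct reading of the statement.
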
 
\begin{proof}
See the Proposition I.B.10 of \cite{GGK}. 
\end{proof}


\subsection{Period map around boundary point}\label{pm_bd}
Let $(\calH,\calF)$ be a VHS over the punctured unit disk $\Delta^*$ with monodromy $N$ of the form (\ref{norm_N}). 
Hereafter, we fix such a presentation with $a,b,e,f$. 
For the monodromy group $\Gamma=\langle T \rangle$, we have the period map $\phi:\Delta^*\to\Gamma\bs D$ and its lifting $\tilde{\phi}:\mathbb{H}\to D$. 
Now $\exp{(-zN)}\tilde{\phi}(z)$ descends to a holomorphic map on $\Delta$, we denote it by $\psi(s)$ where $s=\exp{(2\pi \sqrt{-1}z)}$.
Here $F_{\infty}=\psi(0)$ is the LHF and then $F_{\infty}^3\cap \overline{F_{\infty}^3}\pmod{W_5}$ is generated by $e_3$.
We may choose a generator
$$
e_3+\pi_2e_2+\pi_1e_1+\pi_0e_0
$$
of the subspace $F^3_{\infty}$ for some $\pi_2,\pi_1,\pi_0\in\CC$.
Then the subspace $F_{\psi(s)}^3$ corresponding to $\psi(s)\in\check{D}$ is generated by 
$$
\psi_3(s)e_3+\psi_2(s)e_2+ \psi_1(s)e_1+\psi_0(s)e_0
$$
where $\psi_i$ for $0 \le i \le 3$ are some holomoprhic functions on $\Delta$ with $\psi_3(0)= 1$ and $\psi_i(0)= \pi_i$ for $0 \le i \le 2$.  
By untwisting $\psi$, a local frame of the subspace $\calF^3$ spanned by the period is given by 
$$\begin{bmatrix}\omega_3(s)\cr\omega_2(s)\cr\omega_1(s)\cr\omega_0(s)\end{bmatrix}:=\exp{(zN)}\begin{bmatrix}\psi_3(s)\cr\psi_2(s)\cr\psi_1(s)\cr\psi_0(s)\end{bmatrix}.$$
Here $\omega_3(s)=\psi_3(s)$ and
$$\omega_2(s)=a\omega_3(s)\frac{\log{(s)}}{2\pi \sqrt{-1}}+\psi_2(s).$$
Therefore
\begin{align}\label{q}
q(s):=\exp{\left(2\pi \sqrt{-1}\frac{\omega_2(s)}{a\omega_3(s)}\right)}=\exp{\left(2\pi \sqrt{-1} \frac{\psi_2(s)}{a\psi_3(s)}\right)}s
\end{align}
defines a new coordinate of $\Delta$, which is known as the {\it mirror map} in the context of  mirror symmetry. 

By the discussion in \S \ref{cpt_pd}, we have the extended period map $\phi:\Delta\to\Gamma\bs \Ds$, 
and the geometric structure of the image $\phi (\Delta)\subset \Gamma\bs \Ds$ is induced by that of the $\C$-torsor $E_{\sigma}\to\GsDs$.
\begin{lem}\label{surj}
The period map $\phi:\Delta \to \phi (\Delta)$ is an isomorphism as analytic spaces. 
\end{lem}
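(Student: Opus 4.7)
The plan is to construct an explicit holomorphic inverse to $\phi$ via the mirror map coordinate $q(s)$ from (\ref{q}). Since
$$
q(s) = s \cdot \exp\!\left(\frac{2\pi i\, \psi_2(s)}{a\,\psi_3(s)}\right),
$$
we have $q(0) = 0$ and $q'(0) = \exp(2\pi i \pi_2/a) \ne 0$, so after shrinking $\Delta$ the map $q$ is a biholomorphism onto its image. It therefore suffices to exhibit a holomorphic function $\bar Q$ on a neighborhood of $\phi(0)$ in $\GsDs$ satisfying $\bar Q \circ \phi = q$: then $q^{-1} \circ \bar Q$ is a holomorphic inverse to $\phi$, which shows that $\phi : \Delta \to \phi(\Delta)$ is an isomorphism of analytic spaces.

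I build $\bar Q$ from the $\C$-torsor $E_\sigma \to \GsDs$ of Section \ref{cpt_pd}. By unwinding the definition of the projection $E_\sigma \to \GsDs$, two points $(s_1, F_1), (s_2, F_2) \in E_\sigma$ lie in the same fiber if and only if there is $u \in \C$ with $s_2 = s_1 e^{2\pi i u}$ and $F_2 = \exp(-uN) F_1$; this identifies the $\C$-action on $E_\sigma$. On a neighborhood of $F_\infty$ in $\check D$, the quotient $F^3 \twoheadrightarrow \Gr^W_6 = \C\cdot e_3$ is an isomorphism, so $F^3$ has a unique generator of the form $e_3 + \alpha_2(F) e_2 + \alpha_1(F) e_1 + \alpha_0(F) e_0$ with holomorphic coefficients $\alpha_j$. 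Set
$$
Q(s,F) := s \cdot \exp\!\left(\frac{2\pi i\,\alpha_2(F)}{a}\right),
$$
a holomorphic function on the relevant open subset of $E_\sigma$.

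The crux of the argument is to verify that $Q$ is $\C$-invariant, so that it descends to a holomorphic function $\bar Q$ on $\GsDs$. From the normalization (\ref{norm_N}), $N e_3 = a e_2 + e e_1 + f e_0$, so applying $\exp(-uN)$ to $e_3 + \alpha_2 e_2 + \cdots$ preserves the $e_3$-coefficient (still $1$) and sends the $e_2$-coefficient from $\alpha_2$ to $\alpha_2 - au$. Hence
$$
Q(u\cdot(s,F)) = s e^{2\pi i u} \cdot \exp\!\left(\tfrac{2\pi i (\alpha_2 - au)}{a}\right) = Q(s,F).
$$
The tautological lift of $\phi$ to $E_\sigma$ is $\hat\phi(s) = (s,\psi(s))$, with $\psi(s)$ having distinguished generator $\psi_3(s) e_3 + \psi_2(s) e_2 + \cdots$ and $\psi_3(0) = 1$, so $\alpha_2(\psi(s)) = \psi_2(s)/\psi_3(s)$, and consequently $\bar Q(\phi(s)) = q(s)$, as required. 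I expect the main delicate point to be pinning down the $\C$-action formula and the transformation rule $\alpha_2 \mapsto \alpha_2 - au$ from the matrix entries in (\ref{norm_N}); once these computations are made, invariance of $Q$ and the existence of the inverse follow immediately.
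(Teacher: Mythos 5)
Your proof is correct and follows essentially the same route as the paper: both exploit the mirror map $q$ together with the $\C$-torsor structure of $E_{\sigma}\to\GsDs$. The paper packages this as a holomorphic section $\rho:\phi(\Delta)\to E_{\sigma}$, $\phi(s)\mapsto\bigl(q(s),\exp(-\tfrac{\psi_2}{a\psi_3}N)\psi(s)\bigr)$, whereas you write down the equivalent $\C$-invariant function $Q(s,F)=s\exp(2\pi i\alpha_2(F)/a)$ descending to the quotient; these are the same argument in dual form.
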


\begin{proof}
The coordinate $q$ gives a local section of the $\C$-torsor $E_{\sigma}\to\GsDs$ restricted on the image $\phi(\Delta)$.
In fact, we can define 
$$
\rho :\phi(\Delta) \to E_{\sigma};\quad \phi(s)\mapsto (q(s),\exp{\left(-\frac{\psi_2(s)}{a\psi_3(s)}N\right)}\psi(s)). 
$$
This induces isomorphsims $\Delta\cong\rho (\phi(\Delta))\cong\phi(\Delta)$ as analytic spaces. 
\end{proof}
Moreover the map $\Delta \to \phi (\Delta)$ induces an isomorphism of the log structures in a manner similar to \cite[\S 4--5]{U2}.


\subsection{Normalization of LHF}
Let $(\sigma,\exp{(\sigma_{\CC})}F)$ be a nilpotent orbit, i.e. $(W,F)$ is a LMHS.
We show that we have a canonical choice of $F$ which has a normalized form with respect to the symplectic basis $e_3,\ldots ,e_0$.

For the LMHS $(W,F)$, we have the Deligne decomposition $H_{\CC}=\bigoplus_{0\leq j\leq 3}I^{j,j}$ so that
$$
W_{2p}= \bigoplus_{k\leq p} I^{k,k},\quad F^{p}=\bigoplus_{k\geq p} I^{k,k}
$$
for $0 \le p \le 3$.
We can take a unique generator $v_p\in I^{p,p}$ such that $[v_p]=[e_p]$ in $\Gr_{2p,\CC}^W$. 
By \cite[Proposition (I.C.2)]{GGK}, with repect to the basis $v_3,\dots,v_0$, the matrix $N$ is of the form 
$$\begin{bmatrix}0&0&0&0\cr a&0&0&0\cr 0&b&0&0\cr 0&0&-a&0\end{bmatrix}.$$
Moreover, by \cite[Proposition (I.C.4)]{GGK}, the period matrix of $F$ is then written as
\begin{align}\label{pm}
\begin{bmatrix}1&0&0&0\cr \pi_2&1&0&0\cr \pi_1&\frac{b}{a}\pi_2+\frac{e}{a}&1&0\cr \pi_0&\frac{e}{a}\pi_2+\frac{f}{a}-\pi_1&-\pi_2&1\end{bmatrix}. 
\end{align}
By multiplying $F$ by $\exp{\left(-\frac{\pi_2}{a}N\right)}$, we may assume that $\pi_2=0$. 
For $\pi_2=0$, by the second bilinear relation \cite[(I.C.10)]{GGK}, the period matrix (\ref{pm}) is of the form
\begin{align}\label{norm_pm}
\begin{bmatrix}
1&0&0&0\cr 
0&1&0&0\cr 
\frac{f}{2a}&\frac{e}{a}&1&0\cr 
\pi_0& \frac{f}{2a}&0&1\end{bmatrix}.
\end{align}
Here the values $\frac{f}{2a}$, $\frac{e}{a}$ and $\pi_0$ correspond to the extension class of the LMHS \cite[\S I.C]{GGK}.
We observe that the boundary component $D_{\sigma}\setminus D \cong \C$ is parametrized by $\pi_0$. 

Recall that the LHF depends on the choice of a coordinate for a VHS (Remark \ref{LMHF}). 
If we use the canonical coordinate $q$ of (\ref{q}), the normalized period matrix takes the form of (\ref{norm_pm}). 
In this case, the LHF is given by
$$
F_{\infty}^3=\lim_{z\to 0}\exp{\left(-\frac{\log{z}}{2\pi \sqrt{-1}}N\right)}F^3_z=\begin{bmatrix}1\cr 0\cr \frac{f}{2a}\cr \pi_0 \end{bmatrix}.
$$


\section{Generic Torelli theorem} \label{Ch Generic Torelli}
The goal of this section is to show the generic Torelli theorem for a class of one-parameter families of Calabi--Yau threefolds.


\subsection{Degree of period map}
Let $\mathscr{X}\rightarrow S$ be a one-parameter family of Calabi--Yau threefolds. 
Given a smooth compactification $\bar{S}$ of $S$ so that $\bar{S}-S$ consists of finite points. 
Let $\phi:S\to \Gamma\bs D$ be the period map associated to the VHS on $H:=H^3(X,\Z)/\mathrm{Tor}$ for a fixed fiber $X$, 
where $\mathrm{Tor}$ denotes the torsion part of $H^3(X,\Z)$. 
Although the monodromy group $\Gamma$ is not necessary a neat subgroup of $G_{\ZZ}$, there always exists a neat subgroup $\Gamma'$ of $\Gamma$ of finite index. 
In this situation, we have a lifting $\tilde{\phi}$ of $\phi$
\begin{align*}
\xymatrix{
\tilde{S}\ar@{->}[d]\ar@{->}[r]^{\tilde{\phi}} & \Gamma'\bs D\ar@{->}[d]\\
S\ar@{->}[r]^\phi & \Gamma\bs D
}
\end{align*}
where $\tilde{S}$ is a finite covering of $S$. 
To show the generic Torelli theorem for $\phi$, it suffices to show the theorem for the lifting $\tilde{\phi}:\tilde{S}\to\Gamma'\bs D$. 
Therefore we henceforth assume that $\Gamma$ is neat. 
We also assume that the Kodaira--Spencer map is an isomorphism on the base curve $S$ to exclude trivial cases \cite{BG}.  
To summarize, we assume that:
\begin{enumerate}
\item the monodromy group $\Gamma$ is neat; 
\item the Kodaira--Spencer map is an isomorphism on $S$. 
\end{enumerate}

We define a fan 
$$
\Xi:=\{\RR_{\geq 0}N\; | \; N\text{ is a nilpotent element in }\mathfrak{g}_{\mathbb{Q}}\}.
$$
In this one-parameter situation, the nilpotent cones are one-dimensional and are contained in $\Xi$. 
In fact, under the condition $h^{3,0}=h^{2,1}=1$, the nilpotent cones which generate nilpotent orbits are contained in $\Xi$, and such a fan is called a complete fan in \cite{KU}.
Except for some special cases such as the one-parameter case or the Hermitian symmetric case, a construction of a complete fan is not known.  

By \cite{KU}, the partial compactification $\Gamma\bs D_{\Xi}$ of $\Gamma\bs D$ is a logarithmic manifold and the period map extends to $\phi:\bar{S}\to \Gamma\bs D_{\Xi }$. 
By Theorem \ref{image}, the image $\phi(\bar{S})$ and the map $\phi$ is analytic.
Moreover $\phi$ is proper and thus a finite covering map.  
\begin{prop}\label{deg}
Let $p\in\bar{S}-S$ be a MUM point. 
If $\phi^{-1}(\phi(p))=\{p\}$, then the map $\phi:\bar{S}\rightarrow \phi(\bar{S})$ is of degree $1$.
\end{prop}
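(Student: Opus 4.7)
The plan is to reduce the global degree computation to the local picture at $p$ and exploit Lemma \ref{surj} together with the singleton-fiber hypothesis. Concretely, I aim to produce an open subset $V\subset\phi(\bar{S})$ containing $\phi(p)$ on which $\phi$ restricts to a biholomorphism, which immediately forces the degree of the proper map $\phi:\bar{S}\to\phi(\bar{S})$ to be $1$.

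Step 1 sets up the global structure: by Theorem \ref{image} and the discussion preceding the proposition, $\phi:\bar{S}\to\phi(\bar{S})$ is a proper analytic surjection between compact reduced irreducible one-dimensional analytic spaces, hence a finite branched covering of some degree $d\geq 1$. Step 2 handles the local behavior at the MUM point: choose a disk neighborhood $\Delta\subset\bar{S}$ of $p$ with coordinate $s$ as in \S\ref{pm_bd}; Lemma \ref{surj} directly provides an isomorphism $\phi|_{\Delta}\colon\Delta\xrightarrow{\sim}\phi(\Delta)$ of analytic spaces. Step 3 isolates the fiber over $\phi(p)$ using properness: since $\bar{S}\setminus\Delta$ is compact, so is its image $\phi(\bar{S}\setminus\Delta)$, and the hypothesis $\phi^{-1}(\phi(p))=\{p\}$ yields $\phi(p)\notin\phi(\bar{S}\setminus\Delta)$. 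Setting
$$
V\;:=\;\phi(\bar{S})\setminus\phi(\bar{S}\setminus\Delta),
$$
we obtain an open neighborhood of $\phi(p)$ in $\phi(\bar{S})$ with $\phi^{-1}(V)\subset\Delta$. Step 4 concludes: the restriction $\phi|_{\phi^{-1}(V)}\colon\phi^{-1}(V)\to V$ is the further restriction of the isomorphism from Step 2, hence itself an isomorphism, so every point of $V$ has a singleton fiber and consequently $d=1$.

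The main obstacle is really the preparatory work that has already been laid down rather than anything internal to this proof: one needs Lemma \ref{surj} (which in turn relies on the canonical coordinate $q$ of \S\ref{pm_bd} and the structure of the $\CC$-torsor $E_{\sigma}\to\Gamma(\sigma)^{\mathrm{gp}}\backslash D_{\sigma}$) in order to know that $\phi$ is a genuine local isomorphism at $p$, not merely quasi-finite. Once this is granted, the remaining properness-plus-topology argument of Step 3 is entirely standard for compact Hausdorff targets, and the conclusion is immediate.
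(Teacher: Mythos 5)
Your proof is correct and follows essentially the same route as the paper: invoke Lemma \ref{surj} for the local isomorphism at the MUM point, then use the singleton-fiber hypothesis to conclude that $\phi(p)$ is not a branch point, forcing the degree of the finite covering to be $1$. You have merely spelled out (via the compactness argument producing the neighborhood $V$) the details that the paper's two-line proof leaves implicit.
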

\begin{proof}
By Lemma \ref{surj}, a disk $\Delta_p$ around a MUM point $p$ is isomorphic to the image $\phi(\Delta_p)$.
Since $\phi(p)$ is not a branch point, the map $\phi$ must be of degree $1$.  
\end{proof}

For $p\in\bar{S}-S$, the image $\phi(p) $ is the nilpotent orbit determined by the local monodromy and the LHF around $p$.
If the family has only one MUM, we clearly have $\phi^{-1}(\phi(p))=p$, therefore the generic Torelli theorem holds by Proposition \ref{deg}. 

To show the generic Torelli theorem for a family with multi MUMs, 
it suffices to show that there exists a MUM point $p_1$ such that for any other MUM point $p_2$ the condition $\phi(p_1)\ne \phi(p_2)$ holds. 
Let $N_j$ be the logarithm of the local monodromy around $p_j$, and let $F_j$ be the LHF.
Then 
$$\phi(p_j)=(\sigma_j,\exp{(\sigma_{j,\CC})}F_j)\mod{\Gamma}$$
where $\sigma_{j}=\RR_{\geq 0}N_j$. 
As discussed in the previous section, we have the normalized matrix (\ref{norm_N}) of $N_j$ determined by $a_j,b_j,e_j,f_j\in\QQ$ 
and the canonical choice (\ref{norm_pm}) of $F_j$ determined by $\pi_0^j\in\CC$ using a symplectic basis $e^j_3,\ldots ,e^j_0$ satisfying (\ref{pol}).

\begin{prop} \label{image of two MUM}
If $b_1\neq b_2$ or $\pi_0^1- \pi_0^2 \not \in \Q$, then 
$$g(\sigma_1,\exp{(\sigma_{1,\CC})}F_1)\neq (\sigma_2,\exp{(\sigma_{2,\CC})}F_2)$$
for any $g\in G_{\ZZ}$.
In other words, we have $\phi(p_1)\neq \phi(p_2)$.
\end{prop}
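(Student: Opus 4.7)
The plan is to assume for contradiction that some $g\in G_\Z$ satisfies $g(\sigma_1,\exp(\sigma_{1,\C})F_1)=(\sigma_2,\exp(\sigma_{2,\C})F_2)$, and to deduce that both $b_1=b_2$ and $\pi_1-\pi_2\in\Q$ must hold.

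First I would unpack the equality of cones: $g\sigma_1=\sigma_2$ yields $\Ad(g)N_1=cN_2$ for some $c\in\Q_{>0}$, and one argues (from the integrality of $gT_1g^{-1}=\exp(cN_2)\in G_\Z$, the symmetric integrality of $\exp(c^{-1}N_1)\in G_\Z$, and the divisibility relations (\ref{abef})) that $c=1$. Hence $\Ad(g)N_1=N_2$ and $g$ carries $W^1$ to $W^2$. The transported basis $ge_0^1,\ldots,ge_3^1$ is then another integer symplectic basis of $H$ satisfying the normalization (\ref{pol}) for $W^2$, so it differs from $e_0^2,\ldots,e_3^2$ by an element of $G_\Z(W^2)$ (modulo the $\pm 1$ sign ambiguity). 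Writing the matrix of $N_2=\Ad(g)N_1$ in both bases and invoking the $G_\Z(W)$-invariance of $b$ from Proposition \ref{GGK_IB10}, we obtain $b_1=b_2$.

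Next, $g\exp(\sigma_{1,\C})F_1=\exp(\sigma_{2,\C})F_2$ supplies some $z\in\C$ with $gF_1=\exp(zN_2)F_2$. Since $g$ sends the Deligne splitting of $(W^1,F_1)$ to that of $(W^2,\exp(zN_2)F_2)$, we obtain $gv_p^1=\epsilon_p\exp(zN_2)v_p^2$, where $v^j$ is the Deligne basis attached to $F_j$ and the sign $\epsilon_p=\pm 1$ is forced by integrality of $g$ on $\Gr^W_{2p}$; the symplectic constraints moreover give $\epsilon_0\epsilon_3=\epsilon_1\epsilon_2=1$. Applying $g$ to the normalized generator $v_3^1+\frac{f_1}{2a_1}v_1^1+\pi_1v_0^1$ of $F_1^3$ from (\ref{norm_pm}) and equating with a scalar multiple of $\exp(zN_2)(v_3^2+\frac{f_2}{2a_2}v_1^2+\pi_2v_0^2)$, the common factor $\exp(zN_2)$ cancels; the $v_3^2$-coefficient forces the scalar to be $\epsilon_3$, and the $v_0^2$-coefficient gives $\pi_1=\epsilon_0\epsilon_3\pi_2=\pi_2$, and in particular $\pi_1-\pi_2\in\Q$.

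The principal obstacle is the first step---extracting $c=1$ from the integrality of $\exp(cN_2)$ and $\exp(c^{-1}N_1)$. This requires careful bookkeeping using the explicit form of $e^{cN}$ together with (\ref{abef}), forcing both $c$ and $c^{-1}$ to be positive integers and hence $c=1$. Once this is in hand, Proposition \ref{GGK_IB10} delivers $b_1=b_2$ and the Deligne basis comparison delivers $\pi_1=\pi_2$, contradicting the hypothesis in either case.
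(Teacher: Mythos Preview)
Your argument for the $\pi$-part contains a genuine error. The matrix (\ref{norm_pm}) records the Deligne basis vectors $v_p$ in the symplectic basis $e_p$, so $F_1^3=\C v_3^1$; its generator is $v_3^1$ itself, not $v_3^1+\frac{f_1}{2a_1}v_1^1+\pi_1 v_0^1$ (that expression is $v_3^1$ written in the $e^1$-basis, with $v_1^1=e_1^1$ and $v_0^1=e_0^1$). Consequently the relation $gv_3^1=\epsilon_3\exp(zN_2)v_3^2$ is all you obtain from the Deligne comparison, and there is no ``$v_0$-coefficient'' to read off. To extract the $\pi_j$ you must go back to the $e$-bases: integrality of $ge_1^1=\epsilon_1(e_1^2-za_2e_0^2)$ forces $z\in\tfrac{1}{a_2}\ZZ\subset\QQ$, and then comparing $e_0^2$-coefficients in $gv_3^1=\epsilon_3\exp(zN_2)v_3^2$ gives $\epsilon_0\pi_1-\epsilon_3\pi_2\in\QQ$, hence $\pi_1-\pi_2\in\QQ$. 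This is exactly the conclusion the paper reaches (by arguing that $h\in G_\ZZ$ and $N_2\in\frakg_\QQ$ introduce only rational shifts in the canonical $\pi$-coordinate), and it is all the proposition claims. Your stronger assertion $\pi_1=\pi_2$ is not even well-posed: each $\pi_j$ depends on the particular symplectic basis $e^j_\bullet$ satisfying (\ref{pol}), and a different admissible choice shifts $\pi_j$ by a rational number.

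Your $c=1$ step is also not substantiated. Writing $c=p/q$ in lowest terms, the integrality of $e^{cN_2}$ and $e^{c^{-1}N_1}$ together with (\ref{abef}) yields only $q\mid\gcd(a_2,b_2)$ and $p\mid\gcd(a_1,b_1)$ plus further congruences from the $e$- and $f$-entries; this does not obviously force $p=q=1$. The paper avoids isolating $c$: it fixes the basis-change $g\colon e^1_k\mapsto e^2_k$, writes any candidate $\gamma$ as $hg$ with $h\in G_\ZZ(W)$, and invokes the $G_\ZZ(W)$-invariance of $b$ from Proposition~\ref{GGK_IB10} directly on the cone condition.
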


\begin{proof}
We define $g\in G_{\ZZ}$ by $e^1_k\mapsto e^2_k$. 
Then $\Ad{(g)}N_1$ is written as the normalized matrices determined by $a_1,b_1,e_1,f_1$ using the symplectic basis $e^2_3,\ldots ,e^2_0$, and $\Ad{(g)}W(N_1)=W(N_2)$.
We put $W=W(N_2)$.
If $b_1\neq b_2$, there does not exist $h\in G_{\ZZ}(W)$ such that $\Ad{(hg)}N_1\in\sigma_2$ 
since $b_1$ is invariant under the action of $G_{\ZZ}(W)$ by Proposition \ref{GGK_IB10}. 
Then $\Ad(\gamma)\sigma_1\neq \sigma_2\mod{\Gamma}$ for any $\gamma\in G_{\ZZ}$.

Now suppose that $b_1= b_2$ and that there exists $h\in G_{\ZZ}(W)$ such that $\Ad{(hg)}\sigma_1=\sigma_2$. 
The filtration $gF_1$ is written as the normalized period matrix determined by $\pi_0^1$ using $e^2_3,\ldots ,e^2_0$.
Then the period matrix of the canonical choice in 
$$hg\exp{(\sigma_{1,\CC})}F_1=\exp{(\sigma_{2,\CC})}hgF_1$$
is determined by $\pi_0^1+\lambda$ with $\lambda\in\QQ$ since $h\in G_{\ZZ}$ and $N_2\in \frakg_{\QQ}$.
Since $\pi_0^1- \pi_0^2 \not \in \Q$, we conclude that $hg\exp{(\sigma_{1,\CC})}F_1\neq\exp{(\sigma_{2,\CC})}F_2$. 
Therefore there does not exist $\gamma\in G_{\ZZ}$ such that $\Ad(\gamma)\sigma_1=\sigma_2$ and $\gamma\exp{(\sigma_{1,\CC})}F_1\neq\exp{(\sigma_{2,\CC})}F_2$.
\end{proof}

\begin{thm}[Generic Torelli Theorem] \label{generic}
Let $\mathscr{X}\rightarrow S$ be a one-parameter family of Calabi--Yau threefolds with a MUM point.  
Assume that there exists a MUM point $p_1$ such that for any other MUM point $p_2 \in \bar{S}-S$ the condition $b_1\neq b_2$ or $\pi_0^1- \pi_0^2 \not \in \Q$ holds. 
Then the map $\phi:\bar{S}\rightarrow \phi(\bar{S})$ is the normalization of $\phi(\bar{S})$. 
\end{thm}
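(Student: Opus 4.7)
The plan is to verify the single hypothesis required to invoke Proposition \ref{deg} at the distinguished MUM point $p_1$: namely, $\phi^{-1}(\phi(p_1))=\{p_1\}$. Once this is established, Proposition \ref{deg} yields that the proper analytic map $\phi:\bar{S}\to\phi(\bar{S})$ has degree one; since $\bar{S}$ is a smooth compact curve and therefore normal, a degree-one finite surjection from a normal variety onto an analytic space is necessarily its normalization, which is the desired conclusion.

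To establish $\phi^{-1}(\phi(p_1))=\{p_1\}$, I would rule out three disjoint classes of potential preimages in turn. First, for any interior point $s\in S$, the image $\phi(s)$ lies in the open stratum $\Gamma\bs D\subset \Gamma\bs D_{\Xi}$ corresponding to the zero cone of $\Xi$, whereas $\phi(p_1)$ lies in the stratum indexed by the $\Gamma$-orbit of the nonzero cone $\sigma_1$; distinct strata of $\Gamma\bs D_{\Xi}$ are disjoint, so no interior point can map to $\phi(p_1)$. Second, for a non-MUM boundary point $p'\in \bar{S}-S$, its local monodromy logarithm $N'$ falls into type (1) or (2) of Section \ref{Ch Monodromy} and satisfies $(N')^2=0$, while $N_1^3\neq 0$; since the nilpotency order of a generator is preserved under $\Ad(\gamma)$ for $\gamma\in G_{\ZZ}$, the cones $\sigma_1$ and $\sigma'$ lie in distinct $\Gamma$-orbits, so the associated boundary strata of $\Gamma\bs D_{\Xi}$ do not meet and $\phi(p')\neq\phi(p_1)$. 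Third, for any other MUM point $p_2\in\bar{S}-S$, the standing hypothesis supplies $b_1\neq b_2$ or $\pi_1-\pi_2\notin\QQ$, and Proposition \ref{image of two MUM} then yields $\phi(p_1)\neq\phi(p_2)$ directly.

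Combining the three cases gives $\phi^{-1}(\phi(p_1))=\{p_1\}$, Proposition \ref{deg} provides the degree-one statement, and normality of $\bar{S}$ upgrades this to the assertion that $\phi$ is the normalization of $\phi(\bar{S})$. The step that warrants the most care is the stratum-disjointness used in the interior and non-MUM cases: one must identify a point of $\Gamma\bs D_{\Xi}$ with a $\Gamma$-orbit of a pair $(\sigma,Z)\in D_{\Xi}$ and exploit that both the vanishing of $N^{k}$ and the rank of a generator of $\sigma$ are invariant under conjugation by $G_{\ZZ}$. Beyond this bookkeeping, the argument is essentially an assembly of Propositions \ref{deg} and \ref{image of two MUM} together with the properness of the extended period map provided by Theorem \ref{image}.
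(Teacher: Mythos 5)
Your proposal is correct and follows the paper's own route exactly: the paper's proof is the one-line assembly of Proposition \ref{deg} and Proposition \ref{image of two MUM}, and your three-case verification of $\phi^{-1}(\phi(p_1))=\{p_1\}$ (interior points and non-MUM boundary points excluded by stratum/nilpotency considerations, other MUM points by Proposition \ref{image of two MUM}) simply makes explicit what the paper leaves implicit. The final upgrade from degree one to normalization via normality of $\bar{S}$ is likewise the intended reading.
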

\begin{proof}
The assertion readily follows from the combination of Proposition \ref{image of two MUM} and Proposition \ref{deg} as discussed above. 
\end{proof}

Theorem \ref{generic} in particular applies to the families of Calabi--Yau threefolds with exactly one MUM point. 
Such examples include almost all one-parameter mirror families of complete intersection Calabi--Yau threefolds in weighted projective spaces and homogeneous spaces \cite{vEvS}.   
We will discuss some Calabi--Yau threefolds with two MUM points in the next section.


\section{Mirror Symmetry}\label{Ch MS}
In this section, we see that the Hodge theoretic invariants $b$ and $\pi_0$ appear in the framework of mirror symmetry. 
Mirror symmetry claims, given a family of Calabi--Yau threefolds $\mathscr{X}\to B$ with a MUM point, 
there exists another family $\mathscr{X}^{\vee} \to B^\vee$ of Calabi--Yau threefolds such that 
some Hodge theoretic invariants of $X$ around the MUM point and symplectic invariants of $X^\vee$ are equivalent in a certain way.  
Here $X$ and $X^\vee$ are generic members of $\mathscr{X} \to B$  and $\mathscr{X}^{\vee} \to B^\vee$ respectively. 
Simply put, mirror symmetry interchanges the complex geometry of a Calabi--Yau threefold $X$ with the symplectic geometry of another, called a mirror threefold $X^\vee$, 
and such a correspondence depends on the choice of a MUM point. 
We should think that each MUM point corresponds to a mirror threefold.  
For instance, if a family $\mathscr{X}\to B$ has several MUM points, there should be several mirror threefolds. 
We refer the reader to \cite{CK2} for a detailed treatment of mirror symmetry.

We now investigate the interplay between the LMHS at a MUM point and the corresponding mirror threefold, 
restricting ourselves to one-parameter models i.e. the case when $h^{2,1}(X)=h^{1,1}(X^{\vee})=1$. 
Since the complex moduli space of $X$ is 1-dimensional, $X$ comes with a family $\mathscr{X}\to S$ over a punctured curve $S$. 
Since mirror symmetry is a statement about a MUM point, we assume that such a puncture point of $S$ corresponding to $X^\vee$ is chosen.

We denote by $\Omega_z$ a holomorphic 3-form on the Calabi--Yau threefold $X_z$ over a point $z \in S$ of the family $\mathscr{X}\to S$. 
On an open disk $\Delta$ around the MUM point $z=0$, there exist solutions $\omega_0,\dots,\omega_3$ of the Picard--Fuchs equation of the following form \cite{CK2}: 
\begin{align} \label{normalized sols}
\omega_3(z) &=  \psi_3(z) =1+O(z),\\ 
2\pi \sqrt{-1}\omega_2(z) &= \psi_3(z)\log(z) + \psi_2(z),\notag \\
(2\pi \sqrt{-1})^2\omega_1(z) &= 2\psi_2(z)\log(z) + \psi_3(z)\log(z)^2 + \psi_1(z), \notag\\
(2\pi \sqrt{-1})^3\omega_0(z) &= 3\psi_1(z)\log(z) + 3\psi_2(z)\log(z)^2 + \psi_3(z)\log(z)^3 +\psi_0(z), \notag
\end{align}
where $\psi_j$ is a power series in $z$ such that $\psi_j(0)=0$ for $0\leq j\leq 2$. 
An important observation is that the local monodromy group at each MUM point is often controlled by the topological invariants of the mirror threefold $X^\vee$ as follows.  
Let $z_0 \in \Delta^*$ be a reference point. 
We equip $H^3(X_{z_0},\Z)/\mathrm{Tor}$ with the standard symplectic form, 
and then mirror symmetry predicts the existence of a symplectic basis $A_0,A_1,B^1,B^0$ of $H_3(X_{z_0},\Z)/\mathrm{Tor}$ such that 
 \begin{align}\label{sympl_basis}
 &  \left[
\begin{array}{cccc}
\int_{A_0}\Omega_z\\
\int_{A_1}\Omega_z\\
\int_{B^1}\Omega_z \\
\int_{B^0}\Omega_z \\
\end{array}
\right]
 = \left[
\begin{array}{cccc}
1 & 0 & 0 & 0\\
0 & 1 & 0 &0 \\
-\frac{c_2(X^\vee)\cdot H}{24} & \lambda & \frac{\deg{X^\vee}}{2} & 0\\
\frac{\zeta(3)\chi(X^\vee)}{(2\pi \sqrt{-1})^3} &  -\frac{c_2(X^\vee)\cdot H}{24} &  0 & -\frac{\deg{X^\vee}}{6} \\
\end{array}
\right]
  \left[
\begin{array}{cccc}
\omega_3(z)\\
\omega_2(z)\\
\omega_1(z)\\
\omega_0(z)\\
\end{array}
\right],
\end{align}
where $\lambda=1$ if $\deg{X^\vee}$ is even and $=-\frac{1}{2}$ otherwise.  
This observation was first made in \cite{CdOGP}. 
Although it is conjectural in general, it remains true for a large class of Calabi--Yau threefold, for example, those listed in \cite[Table 1]{vEvS}. 
\begin{Prop} \label{top inv}
Assume the relation (\ref{sympl_basis}). Then the normalized matrix (\ref{norm_N}) of $N=\log{T}$ and the normalized period matrix (\ref{norm_pm}) of the LHF are determined by 
\begin{align}
a=1,\ b=\deg{X^\vee}, \ e=\begin{cases}1&\text{if $b$ is even}\cr -\frac{1}{2}&\text{if $b$ is odd}, \end{cases} \
f=-\frac{c_2(X^\vee)\cdot H}{12},\ \pi_0=\frac{\chi(X^\vee)\zeta (3)}{(2\pi \sqrt{-1})^3}.\notag
\end{align}
\end{Prop}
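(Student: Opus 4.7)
The plan is to derive the normalized monodromy matrix (\ref{norm_N}) and the normalized period matrix (\ref{norm_pm}) of the limit Hodge filtration directly from the symplectic-period relation (\ref{sympl_basis}), by tracking what happens under the local monodromy and under the untwisting procedure of Section \ref{pm_bd}.

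First I would compute the monodromy acting on the column $(\omega_3,\omega_2,\omega_1,\omega_0)^T$ of Picard--Fuchs solutions: substituting $\log z\mapsto\log z+2\pi i$ in (\ref{normalized sols}) gives a unipotent Pascal-type matrix $U$ whose logarithm is strictly lower triangular with sub-diagonal entries $1,2,3$. Combining with (\ref{sympl_basis}), the monodromy on the column of symplectic periods becomes $T=MUM^{-1}$, so $N=M(\log U)M^{-1}$. Identifying the symplectic homology basis $(A_0,A_1,B^1,B^0)$ with an integral basis $(e_0,e_1,e_2,e_3)$ of $H^3(X_{z_0})$ via Poincar\'e duality, with signs chosen so that the polarization takes the standard form (\ref{pol}), and reordering to the paper's convention, I expect $N$ to assume the lower-triangular form (\ref{norm_N}) with $a=1$, $b=2m_{33}=\deg X^\vee$, $e=m_{32}=\lambda$, and $f=m_{42}-3m_{31}m_{44}/m_{33}=-c_2(X^\vee)\cdot H/12$; the $(4,3)$-entry provides the consistency check $-a=3m_{44}/m_{33}=-1$.

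For the limit Hodge filtration, I would compute the untwisted frame $(\psi_3,\psi_2,\psi_1,\psi_0)^T=\exp(-zN)(\omega_3,\omega_2,\int_{B^1}\Omega_z,\int_{B^0}\Omega_z)^T$ using (\ref{sympl_basis}). The log-polynomial form of the $\omega_j$ in (\ref{normalized sols}) is exactly what is needed for $\exp(-zN)$ to cancel every power of $\log z$, so the $\psi_j$ become holomorphic in $z$. Moreover the hypothesis $\psi_j(0)=0$ for $j\le 2$ built into (\ref{normalized sols}) forces $\psi_2(0)=0$, so $z$ is already in the canonical $\pi_2=0$ coordinate of Section \ref{cpt_pd} and no further conjugation by $\exp(-\tfrac{\pi_2}{a}N)$ is required. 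Evaluating at $z=0$ and reading off the entries of the normalized period matrix (\ref{norm_pm}) then yields $\psi_3(0)=1$, $\psi_1(0)=m_{31}=-c_2(X^\vee)\cdot H/24=f/(2a)$, and $\psi_0(0)=m_{41}=\chi(X^\vee)\zeta(3)/(2\pi i)^3=\pi$, as claimed.

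The main obstacle is sign bookkeeping: matching the symplectic homology basis $(A_0,A_1,B^1,B^0)$ to an integral cohomology basis compatible with (\ref{pol}) involves Poincar\'e duality signs together with the skew-symmetry of the intersection form, and a single misplaced sign would flip one of $a,b,e,f,\pi$. Once the basis identification is pinned down correctly, the remainder is essentially matrix algebra on $4\times 4$ unipotent matrices, and each topological invariant of $X^\vee$ lands in a prescribed position coming directly from the corresponding entry of the matrix $M$ in (\ref{sympl_basis}).
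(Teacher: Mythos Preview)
Your approach is essentially the same as the paper's: compute the monodromy matrix acting on the column of Picard--Fuchs solutions (\ref{normalized sols}) and conjugate by the change-of-basis matrix $M$ in (\ref{sympl_basis}) to obtain $N$ in the symplectic basis, then read off $a,b,e,f$; do the analogous untwisting to get the LHF and read off $\pi$. The paper's own proof is a two-sentence sketch of exactly this procedure, so your proposal is correct and in fact fills in considerably more of the matrix algebra than the paper does; your caveat about sign and ordering bookkeeping is apt and is the only place where care is needed.
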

\begin{proof}
The monodromy matrix of $[\omega_3,\omega_2,\omega_1,\omega_0]^T$ can easily read from the normalization condition (\ref{normalized sols}).  
It is not hard to rewrite it with respect to the symplectic basis to obtain $N$. 
The LHF is obtained in a similar manner. 
\end{proof}
Therefore we see that the LMHS reflects the topological invariants of the mirror threefold. 
With this topological interpretation, for instance, the integrality condition (\ref{abef}) is explained by the Riemann--Roch theorem. 

\begin{exm}
For the mirror family of a quintic threefold $X^\vee$, $a,b,e,f$ and $\pi_0$ are computed in \cite[(III.A)]{GGK}: 
$$
a=-1,\ b=5, \ e=\frac{11}{2},\ f=-\frac{25}{6},\ \pi_0=\frac{-200\zeta(3)}{(2\pi \sqrt{-1})^3}.
$$
Here $\deg X^\vee=5$, $c_2(X^\vee)\cdot H=50$ and $\chi(X^\vee)=-200$.
By the base change (\ref{trans}), we may change $a$ and $e$ into $1$ and $-\frac{1}{2}$ respectively.  
\end{exm}


\subsection{Multiple Mirror Symmetry}\label{const_mir}
We find Theorem \ref{generic} and Proposition \ref{top inv} particularly interesting when a family of Calabi--Yau threefolds has two MUM points.
Such a family is of considerable interest because the existence of two MUMs implies the existence of two mirror partners. 
Such a multiple mirror phenomenon has been studied in \cite{Rod, Kan, HT, Miu}.   
Here we will discuss two examples.  

\begin{exm}[Grassmannian and Pfaffian Calabi--Yau threefold] \label{Pfaff-Grass}
The Grassmannian $\Gr(2,7)$ has a canonical polarization via the Pl\"ucker embedding into $\PP^{20}$. 
The complete intersection of 7 hyperplane sections of this embedding
$$
X^\vee:=\Gr(2,7)\cap (1^7) \subset \PP^{20}
$$
is a Calabi--Yau threefold with $h^{1,1}=1$.

On the other hand, let $N$ be a $7\times 7$ skew-symmetric matrix $N = (n_{ij})$ with $[n_{ij}]_{i<j} \in \PP^{20}$. 
The rank $4$ locus forms a codimension $3$ variety $\mathrm{Pfaff}(7)$ in $\PP^{20}$, known as the Pfaffian. 
The complete intersection of 14 hyperplane sections of the Pfaffian variety 
$$
Y^\vee:=\mathrm{Pfaff}(7)\cap (1^{14}) \subset \PP^{20}
$$
is a Calabi--Yau threefold with $h^{1,1}=1$. 

In each case, a mirror family is constructed by an orbifolding method \cite{Rod}. 
A surprising observation is that the mirror families $\mathscr{X}\rightarrow \PP^1$ and $\mathscr{Y}\rightarrow \PP^1$ are identical and have exactly two MUM points: 
one corresponds to $X^\vee$ and the other to $Y^\vee$.
Mirror symmetry for this example was confirmed in \cite{BCFKS,Tjo}. 
Since $\deg X^\vee=42 \ne \deg Y^\vee=14$, Theorem \ref{generic} applies and the geneic Torelli theorem holds for the mirror family. 
An identical argument applies to, for example, the Calabi--Yau threefolds constructed in \cite{HT,Miu}. 
\end{exm}

\begin{exm}[Complete Intersection $\Gr(2,5)\cap \Gr(2,5)\subset \PP^9$]
Let $i_1,i_2:\mathrm{Gr}(2,5)\hookrightarrow \mathbb{P}^{9}$ be generic Pl\"ucker embeddings. 
It is shown in \cite{Kan} that the complete intersection 
$$
X^\vee:=i_{1}(\mathrm{Gr}(2,5))\cap i_{2}(\mathrm{Gr}(2,5))
$$
is a Calabi--Yau threefold with $h^{1,1}=1$. 
In \cite{Miu2}, a mirror family $\mathscr{X}\rightarrow \PP^1$ of $X^\vee$ was constructed via a toric degeneration of $\Gr(2,5)$ to a Hibi toric variety. 
The mirror family has exactly two MUM points, both of which correspond to $X^\vee$. 
The Hodge theoretic invariants around the MUM points are identical and thus Theorem \ref{generic} cannot be applied in this case. 
It is worth mentioning that it is recently shown that $X^\vee$ gives a counterexample to the birational Torelli problem \cite{BCP}. 
\end{exm}


\par\noindent{\scshape \small
School of Business Administration, Senshu University,\\
2-1-1 Higashimita, Tama, Kawasaki, Kanagawa 214-8580, Japan.}
\par\noindent{\ttfamily hayama@isc.senshu-u.ac.jp}
\break
\par\noindent{\scshape \small
Department of Mathematics, Kyoto University, \\
Kitashirakawa-Oiwake, Sakyo, Kyoto, 606-8502, Japan }
\par\noindent{\ttfamily akanazawa@math.kyoto-u.ac.jp}

\end{document}